\documentclass[11pt,a4paper,reqno]{amsart}

\usepackage{lmodern}
\usepackage{amsmath,amssymb,mathtools}
\usepackage{microtype}
\usepackage{needspace}
\usepackage[margin=28mm]{geometry}
\usepackage[hidelinks]{hyperref}

\makeatletter
\renewcommand{\@setauthors}{%
  \par\vspace{1.2ex}%
  \begingroup
  \centering
  \def\authorcolumn##1##2{%
    \begin{minipage}[t]{0.47\textwidth}
      \centering
      {\small\scshape ##1\par}%
      \vspace{0.5ex}%
      \footnotesize
      School of Mathematical Sciences\par
      Xiamen University\par
      Xiamen, China\par
      \href{mailto:##2}{\texttt{##2}}\par
    \end{minipage}%
  }%
  \noindent
  \authorcolumn{Jungang Chen}{jgchen@stu.xmu.edu.cn}%
  \hfill
  \authorcolumn{Jiaxin Xie}{jiaxinxie@stu.xmu.edu.cn}\par
  \endgroup
  \vspace{1ex}%
}
\let\@setaddresses\relax
\makeatother

\numberwithin{equation}{section}
\newtheorem{theorem}{Theorem}[section]
\newtheorem*{theorem*}{Theorem}
\newtheorem*{conjecture}{Conjecture}
\newtheorem{lemma}[theorem]{Lemma}

\theoremstyle{definition}

\theoremstyle{remark}

\newcommand{\sig}{\sigma}

\hypersetup{
  pdftitle={A direct inductive proof of the sharp Merino--Welsh threshold for matroids},
  pdfsubject={A direct inductive proof of the sharp Merino--Welsh threshold for matroids},
  pdfkeywords={Tutte polynomial, matroid, Merino-Welsh inequality, optimal constant},
  pdfauthor={Jungang Chen and Jiaxin Xie}
}

\title[A direct inductive proof of the sharp Merino--Welsh threshold for matroids]{A direct inductive proof of the sharp Merino--Welsh threshold for matroids}
\author{Jungang Chen}
\address{School of Mathematical Sciences\\
  Xiamen University\\
  Xiamen, China}
\email{jgchen@stu.xmu.edu.cn}
\author{Jiaxin Xie}
\address{School of Mathematical Sciences\\
  Xiamen University\\
  Xiamen, China}
\email{jiaxinxie@stu.xmu.edu.cn}
\thanks{Corresponding author: Jungang Chen.}
\date{}
\subjclass[2020]{Primary 05B35; Secondary 05C31}

\keywords{Tutte polynomial, matroid, Merino--Welsh inequality, optimal constant, parallel extension}

\begin{document}

\begin{abstract}
Motivated by the Merino--Welsh conjecture, we consider the smallest $c\ge0$, denoted by $c_*$, for which the inequality $T_M(c,0)T_M(0,c)\ge T_M(1,1)^2$ holds for every loopless and coloopless finite matroid $M$. The counterexamples constructed by Beke, Cs\'aji, Csikv\'ari, and Pituk [\emph{Adv. Math.} \textbf{446} (2024), 109674] give the lower bound $x_0$, where $x_0\approx2.22668$ is the largest real root of the polynomial $x^3-9(x-1)$. Later, Csikv\'ari [\emph{European J. Combin.} \textbf{137} (2026), 104402] improved the known upper bound for this constant to $2.35$ and then conjectured that the above inequality holds at $c=x_0$. This conjecture was recently proved by Liu (2026). We give an alternative direct inductive proof that $c_*=x_0$, without computer-assisted finite verification.
\end{abstract}

\maketitle

\section{Introduction}

Let $M$ be a finite matroid with ground set $E=E(M)$ and rank function $r_M$. Its Tutte polynomial is defined by
\[
T_M(x,y)=\sum_{A\subseteq E}(x-1)^{r_M(E)-r_M(A)}(y-1)^{|A|-r_M(A)}.
\]
For $c\ge0$, define $D_c(M)=T_M(c,0)T_M(0,c)-T_M(1,1)^2$, where $T_M(1,1)$ is the number of bases of $M$; we denote the family of bases of the matroid by $\mathcal{B}(M)$ and write $b(M):=|\mathcal{B}(M)|$. The multiplicative Merino--Welsh conjecture for matroids (see \cite[Conjecture~1.2(3)]{KnauerEtAl2018}) states that every loopless and coloopless matroid $M$ satisfies $D_2(M)\ge0$. This conjecture generalizes the multiplicative conjecture proposed by Conde and Merino \cite{CondeMerino2009} for graphs.
In this paper, we define
\[
c_* = \inf\{c\ge0:D_c(M)\ge0\text{ holds for every loopless and coloopless matroid }M\}.
\]
Since the coefficients of the Tutte polynomial are nonnegative, $D_c(M)$ is a continuous, nondecreasing function of $c$ on $[0,\infty)$; together with the known upper bounds for $c_*$ given below, this shows that the above set of parameters is precisely the closed ray $[c_*,\infty)$.

All matroids in this paper are finite, and we allow the empty matroid, whose Tutte polynomial is $1$. Unless otherwise stated, our matroid terminology and notation follow the second edition of Oxley's \emph{Matroid Theory} \cite{Oxley2011}.

Using deletion--contraction induction together with parallel and series reductions, Jackson \cite{Jackson2010} established the inequality for $c=3$. The matroid version of this result is $c_*\le3$; see \cite{Csikvari2026}. Beke, Cs\'aji, Csikv\'ari, and Pituk \cite{BCsCPPermutation2024} introduced the permutation Tutte polynomial, reducing inequalities for the Tutte polynomial of a matroid to estimates on the fundamental graph of each basis (that is, the local basis-exchange bipartite graph). Combining Harris's correlation inequality with a transfer lemma, they obtained $c_*\le2.9243$. On the other hand, let $q(x)=x^3-9(x-1)$ and denote its largest real root by $x_0$. Beke, Cs\'aji, Csikv\'ari, and Pituk \cite{BCsCP2024} replaced each element of a uniform matroid by two parallel elements and compared the exponential growth rates of the relevant Tutte polynomial evaluations. In this way, for every $0\le c<x_0$, they constructed infinitely many loopless and coloopless matroids satisfying $D_c(M)<0$, yielding the lower bound $c_*\ge x_0$; in particular, the multiplicative Merino--Welsh conjecture for matroids is false. Subsequently, Csikv\'ari \cite{Csikvari2026} refined the local degree estimates in the permutation Tutte polynomial framework and combined them with finite numerical checks to obtain $c_*\le2.355$. After excluding one further local structure, Csikv\'ari \cite{Csikvari2026} noted that the same method gives $c_*\le2.35$ and finally proposed the following conjecture:

\begin{conjecture}
For every loopless and coloopless matroid $M$, we have $D_{x_0}(M)\ge 0.$
\end{conjecture}

Liu \cite{Liu2026} recently proved this conjecture using a weighted broken-circuit comparison, the Cunningham--Edmonds decomposition, and a finite table of rational replacements verified by exact computation. We give an alternative proof by deletion--contraction induction, using parallel-class reductions and an elementary scalar inequality. Our proof requires neither a finite replacement table nor computer-assisted verification.

\section{Some lemmas}\label{sec:preliminaries}

We assume that the reader is familiar with the basic concepts and properties of matroids. The relevant properties used in this paper can be found in the second edition of Oxley's \emph{Matroid Theory} \cite[Sections~1.1--1.4, 2.1, 3.1, 4.1--4.2, and 5.4]{Oxley2011}; for the basic properties of the Tutte polynomial, see \cite{BrylawskiOxley1992}.

For a loopless matroid $M$, fix a linear order on $E(M)$ and let $\mathcal{N}(M)$ denote the family of subsets containing no broken circuit, where a broken circuit is obtained from a circuit by deleting its largest element. The broken-circuit theorem \cite{BrylawskiOxley1992} gives
\begin{equation}\label{eq:nbc-expansion}
T_M(c,0)=\sum_{I\in\mathcal{N}(M)}(c-1)^{r(M)-|I|}.
\end{equation}

The deletion--contraction form of the following estimate appears in \cite[Lemma~2.1]{Liu2026}.

\begin{lemma}\label{lem:axis}
If $M$ is loopless, $e\in E(M)$, and $c>1$, then
\begin{equation}\label{eq:axis-contraction}
T_M(c,0)\ge c\,T_{M/e}(c,0).
\end{equation}
Dually, if $M$ is coloopless, then
\begin{equation}\label{eq:axis-deletion}
T_M(0,c)\ge c\,T_{M\backslash e}(0,c).
\end{equation}
\end{lemma}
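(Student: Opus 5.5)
The plan is to work directly from the broken-circuit expansion \eqref{eq:nbc-expansion}, choosing the linear order on $E(M)$ so that $e$ is its \emph{smallest} element. Then I will compare the no-broken-circuit sets of $M$ with those of $M/e$, in the induced order on $E(M)\setminus e$. The dual inequality \eqref{eq:axis-deletion} will follow from \eqref{eq:axis-contraction} applied to $M^*$. Indeed, $M^*$ is loopless exactly when $M$ is coloopless, $(M\backslash e)^*=M^*/e$, and $T_{N^*}(x,y)=T_N(y,x)$.

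First I dispose of the degenerate case. If $M/e$ has a loop, which happens when $e$ has a parallel element, then $T_{M/e}(c,0)=0$. The inequality then follows because $T_M(c,0)\ge0$ for $c>1$, every term in \eqref{eq:nbc-expansion} being nonnegative. So assume $M/e$ is loopless, so that \eqref{eq:nbc-expansion} applies to it with $r(M/e)=r(M)-1$.

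The core step is to show that for every $I\in\mathcal N(M/e)$, both $I$ and $I\cup e$ lie in $\mathcal N(M)$. Since $\mathcal N(M)$ is closed under subsets, it suffices to treat $I\cup e$. This set is independent in $M$ because $I$ is independent in $M/e$.

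Suppose, for contradiction, that some circuit $C$ of $M$ satisfies $C-\max C\subseteq I\cup e$. Because $M$ is loopless and $e$ is minimal, $C\ne\{e\}$, so $\max C\neq e$ and $\max(C-e)=\max C$. The set $C-e$ is dependent in $M/e$, so it contains a circuit $C'$ of $M/e$. I would then use the standard description of circuits of $M/e$ as the minimal nonempty sets $D-e$ with $D$ a circuit of $M$, together with circuit elimination. The aim is to arrange that $C'$ can be chosen with $C'-\max C'\subseteq I$, contradicting $I\in\mathcal N(M/e)$. The cleanest route I would try is the classical fact that $I$ is NBC if and only if no element $x\notin I$ with $I\cup x$ dependent... rather, I expect simply to cite Brylawski's deletion--contraction bijection for NBC sets with $e$ minimal, as in \cite{BrylawskiOxley1992}. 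This verification is the main obstacle; everything else is bookkeeping.

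Granting this, the maps $I\mapsto I$ and $I\mapsto I\cup e$ send $\mathcal N(M/e)$ injectively into $\mathcal N(M)$, and their images are disjoint, since the first image avoids $e$ and the second contains it. Counting exponents in \eqref{eq:nbc-expansion}, the set $I\cup e$ contributes $(c-1)^{r(M)-|I|-1}$ and the set $I$ contributes $(c-1)\cdot(c-1)^{r(M)-1-|I|}$. Since $c>1$, every term is nonnegative, so dropping the remaining NBC sets of $M$ gives
\[
T_M(c,0)\ \ge\ \sum_{I\in\mathcal N(M/e)}\bigl(1+(c-1)\bigr)(c-1)^{r(M/e)-|I|}=c\,T_{M/e}(c,0),
\]
which is \eqref{eq:axis-contraction}.
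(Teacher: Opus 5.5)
Your framework is the same as the paper's. You use the broken-circuit expansion and the two disjoint injections $I\mapsto I$ and $I\mapsto I\cup\{e\}$ of $\mathcal N(M/e)$ into $\mathcal N(M)$. You treat the case where $M/e$ has a loop separately, and you get the second inequality by duality. You put $e$ first in the order while the paper puts it last, and that is harmless: the inclusion $I\in\mathcal N(M/e)\Rightarrow I\cup\{e\}\in\mathcal N(M)$ holds wherever $e$ sits. The gap is that you never prove this inclusion, which is the one nontrivial step of the lemma. You state the aim, mention circuit elimination, drop a half-formulated characterization of NBC sets, and end by saying you ``expect'' to cite a bijection while calling the verification the main obstacle. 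As written, the argument has a hole exactly there.

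The gap closes in a few lines from what you already have, with no need for circuit elimination or the description of circuits of $M/e$. Since $\max(C-e)=\max C$ and $e\notin I$, the assumption $C-\max C\subseteq I\cup\{e\}$ gives $(C-e)-\max(C-e)=(C-\max C)-e\subseteq I$. You already know $C-e$ is dependent in $M/e$, so let $C'\subseteq C-e$ be a circuit of $M/e$.
\begin{itemize}
\item If $\max(C-e)\in C'$, then it equals $\max C'$, so $C'-\max C'\subseteq (C-e)-\max(C-e)$.
\item Otherwise $C'\subseteq (C-e)-\max(C-e)$ already.
\end{itemize}
Either way $I$ contains the broken circuit $C'-\max C'$ of $M/e$, a contradiction.

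With $e$ minimal, this single argument covers both $e\in C$ and $e\notin C$. The paper takes $e$ maximal, so it must split off the case $e\in C$: there $e=\max C$, and $C-e\subseteq I$ contradicts the independence of $I$ in $M/e$. It then runs the argument above for $e\notin C$.

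Citing the classical NBC deletion--contraction correspondence would also be acceptable. With broken circuits formed by removing the maximum and $e$ minimal, it identifies the NBC sets of $M$ containing $e$ with the sets $I\cup\{e\}$, $I\in\mathcal N(M/e)$. If you take that route, state the result precisely and rely on it, rather than hedging.
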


\begin{proof}
If $M/e$ has a loop, then $T_{M/e}(c,0)=0$, and \eqref{eq:axis-contraction} follows immediately from the nonnegativity of the coefficients. We may therefore assume that $M/e$ is loopless. Choose a linear order on $E(M)$ in which $e$ is the largest element, and use the induced order on $E(M)-\{e\}$. We claim that
\begin{equation}\label{eq:nbc-inclusion}
I\in\mathcal{N}(M/e)\quad\Longrightarrow\quad
I,\ I\cup\{e\}\in\mathcal{N}(M).
\end{equation}
It suffices to verify that $I\cup\{e\}\in\mathcal{N}(M)$. Suppose that it contains the broken circuit $C-\{\max C\}$ corresponding to a circuit $C$ of $M$. If $e\in C$, then $e=\max C$ and $C-\{e\}\subseteq I$. However, $C-\{e\}$ is dependent in $M/e$, whereas the no-broken-circuit set $I$ is independent in this matroid, a contradiction; in the remaining case $e\notin C$, we have $C-\{\max C\}\subseteq I$. Since a dependent set remains dependent after contraction, $C$ is dependent in $M/e$. Choose a circuit $C'$ of $M/e$ contained in $C$; then
\[
C'-\{\max C'\}\subseteq C-\{\max C\}\subseteq I.
\]
Thus, $I$ contains a broken circuit of $M/e$, again giving a contradiction. Therefore, \eqref{eq:nbc-inclusion} holds. The two families of sets given by \eqref{eq:nbc-inclusion} are disjoint, and $r(M/e)=r(M)-1$. Since $c-1>0$, equation~\eqref{eq:nbc-expansion} yields \eqref{eq:axis-contraction}:
\begin{align*}
T_M(c,0)
&\ge\sum_{I\in\mathcal{N}(M/e)}
 \left((c-1)^{r(M)-|I|}+(c-1)^{r(M)-1-|I|}\right)\\
&=c\sum_{I\in\mathcal{N}(M/e)}(c-1)^{r(M/e)-|I|}=cT_{M/e}(c,0).
\end{align*}
Applying \eqref{eq:axis-contraction} to $M^*$ and using $(M\backslash e)^*=M^*/e$ gives \eqref{eq:axis-deletion}.
\end{proof}

\begin{lemma}\label{lem:parallel-reduction}
Let $M$ be a connected, loopless, and coloopless matroid, and let $P\subsetneq E(M)$ be a parallel class with $|P|=k\ge2$. Choose $e\in P$ and define $N=M\backslash(P-\{e\})$, $L=N\backslash e=M\backslash P$, and $Q=N/e$. Then $N$ and $Q$ are both loopless and coloopless, while $L$ is loopless. Moreover,
\begin{equation}\label{eq:parallel-polynomial}
T_M(x,y)=T_L(x,y)+(1+y+\cdots+y^{k-1})T_Q(x,y),
\end{equation}
and
\begin{equation}\label{eq:parallel-bases}
b(M)=b(L)+kb(Q).
\end{equation}
\end{lemma}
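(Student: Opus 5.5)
The plan is to handle the structural claims first and then obtain \eqref{eq:parallel-polynomial} by deletion--contraction one element of $P$ at a time; \eqref{eq:parallel-bases} is its specialization at $x=y=1$.

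\emph{Structural claims.} Since $M$ is loopless, so are its deletions $N$ and $L$. The key consequence of connectivity is used for coloops. Suppose $f$ were a coloop of $N$. Since $N$ and $M$ have the same closure on $E(N)$ (the deleted elements are parallel to $e$), we have $r(N)=r(M)$. Then $f$ would be a coloop of $M$ unless $f=e$, because a coloop of $M\backslash X$ with $r$ unchanged meets every basis of $M$ contained in $E(N)$. All bases of $M$ can be moved into $E(N)$ by replacing a member of $P$ with $e$, so $f$ would lie in every basis of $M$, a contradiction. For $f=e$, the same swap argument shows every basis of $M$ meets $P$. Hence $E(M)-P$ has rank $r(M)-1$, so $P$ is a union of cocircuit-related elements and $(P, E-P)$ gives a separation: $r(P)+r(E-P)=1+r(M)-1=r(M)$. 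This contradicts connectivity, since $P\neq E(M)$ and $P\neq\emptyset$.

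For $Q=N/e$: it is loopless because $e$ is in no parallel pair of $N$ (the class $P$ has been reduced to $\{e\}$) and $N$ is loopless. It is coloopless because contraction does not create coloops; a coloop of $N/e$ is a coloop of $N$. Finally, $L$ is loopless as a deletion of $M$.

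\emph{Polynomial identity.} Write $P=\{e=p_1,\dots,p_k\}$ and let $M_j=M\backslash\{p_{j+1},\dots,p_k\}$, so $M_k=M$ and $M_1=N$. For $j\ge2$, the element $p_j$ is neither a loop nor a coloop of $M_j$; it is not a coloop because it is parallel to $e$. So
\[
T_{M_j}=T_{M_{j-1}}+T_{M_j/p_j}.
\]
In $M_j/p_j$ the elements $p_1,\dots,p_{j-1}$ become loops, and removing them gives
\[
T_{M_j/p_j}=y^{j-1}\,T_{M_j/p_j\backslash\{p_1,\dots,p_{j-1}\}}.
\]
The latter matroid equals $M\backslash P$ contracted by one element of $P$, which is isomorphic to $N/e=Q$. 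Also $T_N=T_L+T_Q$, since $e$ is neither a loop nor a coloop of $N$ by the structural claims. Telescoping gives
\[
T_M=T_L+(1+y+\cdots+y^{k-1})T_Q.
\]
Evaluating at $(1,1)$ gives \eqref{eq:parallel-bases}.

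The main obstacle is the coloop argument for $e$ in $N$, which is the only place connectivity and $P\ne E(M)$ enter. I would present it cleanly as follows: if $e$ were a coloop of $N$, then $r(E-P)=r(M)-1$, so $\lambda(P)=r(P)+r(E-P)-r(M)=0$, a separation.
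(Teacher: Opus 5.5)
Your proof is correct. For the identity \eqref{eq:parallel-polynomial} it uses the same telescoping deletion--contraction as the paper; your $M_j$ is the paper's $N_j$ with the parallel copies named.

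The one real difference is how you show that $N$ is coloopless.
\begin{itemize}
\item The paper notes that deleting all but one element of a parallel class preserves connectivity. So $N$ is connected with at least two elements and cannot have a coloop. This quotes a standard but unproved connectivity fact and handles all potential coloops at once.
\item You argue directly from bases and ranks. A coloop $f\ne e$ of $N$ would lie in every basis of $M$, after swapping the unique element of $B\cap P$ for $e$. This step uses only that $M$ is coloopless. If $e$ itself were a coloop of $N$, then every basis of $M$ meets $P$. This gives $r(E-P)=r(M)-1$, hence $r(P)+r(E-P)=r(M)$, a separation.
\end{itemize}
Your version is self-contained. It also makes visible that connectivity and $P\ne E(M)$ are needed only to keep $e$ from becoming a coloop.

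Two small fixes. First, the minor in the telescoping step should be written $M\backslash(P-\{p_j\})/p_j$, not ``$M\backslash P$ contracted by an element of $P$.'' This minor actually equals $Q$ on $E(M)-P$, since $p_j$ and $e$ have the same closure. Second, the phrase about ``cocircuit-related elements'' is unclear. What is true is that $E(M)-P$ is a hyperplane, so $P$ is a cocircuit, though you do not need this.
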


\begin{proof}
Since $M$ is loopless, both $N$ and $L$ are loopless. By the maximality of $P$, no other element of $N$ is parallel to $e$, so $Q=N/e$ is loopless. Keeping one element of a parallel class and deleting the others preserves connectivity, so $N$ is connected. Since $P\subsetneq E(M)$, the matroid $N$ has at least two elements; if $N$ had a coloop, that coloop would form a connected component on its own, which is impossible, so $N$ is coloopless. Since contraction does not create new coloops, $Q$ is also coloopless.

To compute $T_M$, let $N_j$ be the matroid obtained by replacing $e$ in $N$ by $j$ parallel elements. For $j\ge2$, deleting one of these elements gives $N_{j-1}$, while contracting it gives the direct sum of $j-1$ loops and $Q$. Thus,
\[
T_{N_j}(x,y)=T_{N_{j-1}}(x,y)+y^{j-1}T_Q(x,y).
\]
Iterating from $T_{N_1}=T_N=T_L+T_Q$ gives \eqref{eq:parallel-polynomial}; evaluating at $(1,1)$ then yields \eqref{eq:parallel-bases}.
\end{proof}

\begin{lemma}\label{lem:two-coloops}
Let $M$ be loopless and coloopless, and suppose that every element of $M$ belongs to a two-element circuit or a two-element cocircuit. Let $P$ be a parallel class with $|P|\ge2$. If $L=M\backslash P$ has a coloop, then it has at least two coloops.
\end{lemma}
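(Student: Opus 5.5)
The plan is to start from a coloop $f$ of $L=M\backslash P$ and use the hypothesis on $M$ together with circuit--cocircuit orthogonality (a circuit and a cocircuit never meet in exactly one element) to produce a second coloop $g\ne f$ of $L$.

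\emph{Step 1: $f$ is not in a two-element circuit of $M$.} Note $f\notin P$. Suppose $\{f,g\}$ is a circuit of $M$. Then $g\notin P$: otherwise $f$ would be parallel to $g$, and maximality of the parallel class $P$ would put $f\in P$. So $\{f,g\}\subseteq E(L)$ and $\{f,g\}$ is a circuit of $L$. But a coloop lies in no circuit, a contradiction. By hypothesis, $f$ therefore lies in a two-element cocircuit $\{f,g\}$ of $M$.

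\emph{Step 2: $g\notin P$.} If $g\in P$, pick another element $p\in P$, which exists since $|P|\ge2$. Then $\{g,p\}$ is a circuit of $M$ meeting the cocircuit $\{f,g\}$ exactly in $g$, because $p\ne f$. This contradicts orthogonality. Hence $g\in E(L)$ and $g\ne f$.

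\emph{Step 3: $g$ is a coloop of $L$.} Suppose some circuit $C$ of $L$ contains $g$. Since $f$ is a coloop of $L$, we have $f\notin C$. Circuits of $L=M\backslash P$ are exactly the circuits of $M$ avoiding $P$, so $C$ is a circuit of $M$ with $C\cap\{f,g\}=\{g\}$. This again contradicts orthogonality with the cocircuit $\{f,g\}$ of $M$. Hence no circuit of $L$ contains $g$, so $g$ is a coloop of $L$ distinct from $f$.

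No step is a real obstacle; the only point needing care is Step 1. It uses the maximality of $P$ as a parallel class to ensure the partner $g$ of $f$ survives in $L$. The looplessness and colooplessness of $M$ are not needed beyond guaranteeing the setting.
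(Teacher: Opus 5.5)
Your proof is correct and follows the paper's argument. You rule out a parallel partner of $f$ using the maximality of $P$ and the coloop property, obtain a series partner $g$, and exclude $g\in P$ by orthogonality with a two-element circuit inside $P$. The only difference is the last step: the paper passes to $M^*$, where $f,g$ are parallel and $f$ being a loop of $M^*/P$ forces $g$ to be one, whereas you apply orthogonality again to an arbitrary circuit of $L$ through $g$. The two versions are equivalent and equally short.
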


\begin{proof}
Choose a coloop $f$ of $L$. No other element of $M$ is parallel to $f$: such an element in $P$ would contradict the maximality of $P$, whereas one outside $P$ would give a two-element circuit of $L$ through $f$, contradicting the fact that $f$ is a coloop; hence, by hypothesis, $f$ must be in series with some element $g$, that is, $\{f,g\}$ is a cocircuit of $M$. If $g\in P$, choose $h\in P-\{g\}$; the circuit $\{g,h\}$ and the cocircuit $\{f,g\}$ would then intersect only in $g$, contrary to orthogonality, so $g\notin P$. Thus, $f,g$ are parallel in the dual matroid $M^*$. Since $f$ is a loop of $L^*=M^*/P$, the element $g$ is also a loop of $M^*/P$, and hence a coloop of $L$. The two coloops $f,g$ are distinct, proving the claim.
\end{proof}

\begin{lemma}\label{lem:basis-injection}
Let $M$ be coloopless, and suppose that it has a nonloop element $e$. Set $L=M\backslash e$ and $Q=M/e$. Writing $\mathcal{J}$ for the set of coloops of $L$, we have
\begin{equation}\label{eq:basis-injection}
b(Q)\ge |\mathcal{J}|\,b(L).
\end{equation}
\end{lemma}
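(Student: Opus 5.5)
The plan is to build, for each coloop $f\in\mathcal{J}$, an injection $\phi_f\colon\mathcal{B}(L)\to\mathcal{B}(Q)$ whose images for distinct $f$ are pairwise disjoint. This gives at least $|\mathcal{J}|\,b(L)$ distinct bases of $Q$ and hence \eqref{eq:basis-injection}. If $\mathcal{J}=\emptyset$ there is nothing to prove, so we assume $\mathcal{J}\neq\emptyset$.

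First we record some rank facts. Since $M$ is coloopless, $e$ is not a coloop, so $r(L)=r(M)$. Since $e$ is a nonloop, $r(Q)=r(M)-1$. The bases of $Q$ are exactly the sets $B-\{e\}$ with $B$ a basis of $M$ containing $e$. Every coloop $f$ of $L$ lies in every basis of $L$.

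For $B\in\mathcal{B}(L)$ and $f\in\mathcal{J}$ we set $\phi_f(B)=B-\{f\}$, and we show this is a basis of $Q$, i.e.\ that $(B-\{f\})\cup\{e\}$ is a basis of $M$. Since $f$ is a coloop of $L$ but not of $M$, the set $\{e,f\}$ contains a cocircuit of $M$. Neither $\{e\}$ nor $\{f\}$ alone is a cocircuit, because $M$ is coloopless. Hence $\{e,f\}$ is a cocircuit, and $E-\{e,f\}$ is a hyperplane of $M$.

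Now $B$ is a basis of $M$ containing $f$ but not $e$. The fundamental circuit $C(e,B)$ must meet the cocircuit $\{e,f\}$ in a set other than $\{e\}$, by orthogonality, so $f\in C(e,B)$. Therefore $(B-\{f\})\cup\{e\}$ is a basis of $M$, and $B-\{f\}\in\mathcal{B}(Q)$.

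Each $\phi_f$ is injective, since $B=\phi_f(B)\cup\{f\}$. For disjointness, take $f\neq f'$ in $\mathcal{J}$. Every basis of $L$ contains both $f$ and $f'$. So $\phi_f(B)$ contains $f'$ but not $f$, while $\phi_{f'}(B')$ contains $f$. Hence the images are disjoint.

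The only delicate step is showing that $\{e,f\}$ is a cocircuit of $M$ and deriving the exchange from it. The underlying fact is $r_{M^*}$ bookkeeping: $f$ being a coloop of $M\backslash e$ means $f$ is a loop of $M^*/e$, so $\{e,f\}$ is dependent in $M^*$. Since $M^*$ is loopless, this makes $\{e,f\}$ a circuit of $M^*$.
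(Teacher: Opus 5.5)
Your proof is correct and is essentially the paper's: you use the same map $(B,f)\mapsto B-\{f\}$ from $\mathcal{B}(L)\times\mathcal{J}$ to $\mathcal{B}(Q)$, and the same injectivity argument, namely that every coloop of $L$ lies in every basis of $L$. The only difference is in proving that the map lands in $\mathcal{B}(Q)$: you show that $\{e,f\}$ is a cocircuit of $M$ and apply orthogonality with the fundamental circuit $C(e,B)$, whereas the paper applies basis exchange between $B$ and a basis of $M$ avoiding $f$ and observes that the incoming element must be $e$.
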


\begin{proof}
If $\mathcal{J}=\varnothing$, the conclusion is immediate. We may therefore assume that $\mathcal{J}\ne\varnothing$.
Since $e$ is not a coloop of $M$, we have
$r(L)=r(M)$, so every basis of $L$ is also a basis of $M$.
Moreover, since $\mathcal{J}$ is the set of coloops of $L$, we have
$\mathcal{J}\subseteq B$ for every $B\in\mathcal{B}(L)$. Consider the assignment
\[
\begin{aligned}
\Phi\colon \mathcal{B}(L)\times\mathcal{J}
&\longrightarrow \mathcal{B}(Q),\\
(B,j)&\mapsto B\setminus\{j\}.
\end{aligned}
\]
To prove \eqref{eq:basis-injection}, it suffices to show that $\Phi$ is well-defined and injective.

We first verify that the map is well-defined. Take any $B\in\mathcal{B}(L)$ and $j\in\mathcal{J}$; since $j$ is not a coloop of $M$, there exists $B'\in\mathcal{B}(M)$ such that $j\notin B'$. Since $B$ is also a basis of $M$, applying the basis-exchange axiom to $B,B'$ gives an element $j'\in B'\setminus B$ such that $(B\setminus\{j\})\cup\{j'\}\in\mathcal{B}(M)$. If $j'\ne e$, then $(B\setminus\{j\})\cup\{j'\}$ is also a basis of $L$ that omits $j$, contradicting the fact that $j$ is a coloop of $L$; therefore, $j'=e$, and hence $(B\setminus\{j\})\cup\{e\}\in\mathcal{B}(M)$. Since $e$ is a nonloop, the characterization of bases under contraction gives $B\setminus\{j\}\in\mathcal{B}(M/e)=\mathcal{B}(Q)$. Thus, $\Phi$ is well-defined.

We next verify injectivity. Suppose that $\Phi(B_1,j_1)=\Phi(B_2,j_2)$, that is, $B_1\setminus j_1=B_2\setminus j_2$; since $j_1,j_2$ are both coloops of $L$, we have $j_1\in B_2,j_2\in B_1$, so $j_1=j_2$ and consequently $B_1=B_2$. Hence, $\Phi$ is injective.
\end{proof}

\begin{lemma}\label{lem:scalar}
Let $k\ge2$ be an integer, and let $a,b,u,v$ be positive real numbers satisfying $a\ge(x_0-1)b$ and $v\ge(x_0-1)u$. Then
\[
(a+b)\bigl(u+\sig_k v\bigr)
\ge\left(\sqrt{au}+k\sqrt{bv}\right)^2,
\]
where $\sig_k=1+x_0+x_0^2+\cdots+x_0^{k-1}.$
\end{lemma}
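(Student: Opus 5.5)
The plan is to expand both sides, normalize by $bu$, and reduce the claim to a two-variable polynomial inequality on the quadrant $\{a/b\ge x_0-1,\ v/u\ge x_0-1\}$. Expanding, the terms $au$ cancel, and the inequality becomes
\[
\sig_k av+bu+(\sig_k-k^2)bv\ \ge\ 2k\sqrt{abuv}.
\]
Put $t=x_0-1$, $p=\sqrt{a/b}\ge\sqrt t$ and $w=\sqrt{v/u}\ge\sqrt t$. Dividing by $bu$, it suffices to show
\[
F(p,w):=\sig_k p^2w^2-2k\,pw+1+(\sig_k-k^2)w^2\ \ge 0 .
\]
Two identities will be used throughout: $\sig_k t=x_0^k-1$ and $\sig_k(t+1)=\sig_k x_0$.

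\textbf{Step 1 (minimize in $p$).} For fixed $w$, $F$ is a convex quadratic in $p$ with vertex at $p^*=k/(\sig_k w)\le k/(\sig_k\sqrt t)$. We have $p^*\le\sqrt t$ as soon as $k\le\sig_k t=x_0^k-1$. This holds for all $k\ge2$, since $x_0^2-1\approx3.96>2$ and $x_0^k$ grows faster than $k$. So on the allowed range $F$ is increasing in $p$, and it suffices to take $p=\sqrt t$. This gives
\[
f(w)=(\sig_k x_0-k^2)\,w^2-2k\sqrt t\,w+1 .
\]

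\textbf{Step 2 (minimize in $w$).} Set $A=\sig_k x_0-k^2$. I will check that $A\ge k$, i.e.\ $x_0(x_0^k-1)/(x_0-1)\ge k^2+k$. At $k=2$ this reads $x_0+x_0^2\approx7.18\ge6$. For larger $k$ the left side grows geometrically while the right side grows quadratically; a short induction should handle this. Then $A>0$, and the vertex $w^*=k\sqrt t/A$ satisfies $w^*\le\sqrt t$. Hence $f$ is increasing on $[\sqrt t,\infty)$, so the minimum is at $w=\sqrt t$. This reduces everything to the single numerical inequality
\[
g(k):=t(\sig_k x_0-k^2-2k)+1=x_0^{k+1}-x_0-(k^2+2k)(x_0-1)+1\ \ge 0 .
\]

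\textbf{Step 3 (the extremal case).} At $k=2$ we get $g(2)=x_0^3-9x_0+9=q(x_0)=0$. So equality holds exactly at $a=(x_0-1)b$, $v=(x_0-1)u$, which explains why $x_0$ is sharp. For $k\ge2$,
\[
g(k+1)-g(k)=t\bigl(x_0^{k+1}-2k-3\bigr)\ge0,
\]
because $x_0^3\approx11>7$ and multiplying by $x_0>2$ outpaces adding $2$. By induction $g(k)\ge0$ for all $k\ge2$.

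The main obstacle is bookkeeping rather than depth. One must confirm that both vertex conditions ($k\le x_0^k-1$ and $A\ge k$) hold for every $k\ge2$, so that the boundary values are the true minima. The key calculation is recognizing that $g(2)$ is exactly $q(x_0)$. I expect each elementary estimate to need only crude bounds such as $2.2<x_0<2.3$, handled by the same doubling induction as in Step 3.
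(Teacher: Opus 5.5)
Your proof is correct, and it takes a genuinely different route from the paper.

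\textbf{The paper's route.} The paper inducts on $k$. For $k=2$ it rewrites the difference as $\bigl(2\sqrt{av}-\sqrt{bu}\bigr)^2-(3-x_0)(a+b)v$. It then bounds the two pieces separately, using $bu\le av/(x_0-1)^2$ and $(a+b)v\le x_0av/(x_0-1)$, and lands exactly on $q(x_0)\,av/(x_0-1)^2=0$. For the step $k\to k+1$ it shows that the increment $x_0^k(a+b)v-(2k+1)bv-2\sqrt{abuv}$ is positive, using only the crude bound $x_0^k>2k$.

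\textbf{Your route.} You fix $k$, pass to the ratio variables $p,w$, and observe that both one-variable convex quadratics have their vertices to the left of the constraint boundary. So for every $k$ the minimum over the admissible region sits at the corner $a=(x_0-1)b$, $v=(x_0-1)u$. Your residual inequality $g(k)\ge0$ is exactly $x_0^{k+1}\ge(k+1)^2(x_0-1)$. This is the same inequality the paper later extracts in the main proof by plugging the quadruple $(x_0-1,1,1,x_0-1)$ into the lemma.

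\textbf{What each buys.} Your argument shows that, for each fixed $k$, the lemma is equivalent to its corner case. It makes the equality case ($k=2$, at the corner) and the role of $q$ fully transparent. It also confines the induction to a one-variable numerical sequence rather than the four-variable inequality. The paper's argument never locates minimizers and stays in the original variables. The price is a less transparent square completion at $k=2$ and an inductive step that hides where the extremal configuration lies.

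\textbf{Loose ends.} The two auxiliary checks you left as ``should handle this'' close immediately from $x_0\ge2$. First, $\sigma_k x_0=\sum_{j=1}^k x_0^j\ge\sum_{j=1}^k 2j=k(k+1)$, which is the same bound $2^j\ge2j$ the paper uses in the rank-one case. Second, $x_0^k-1\ge2^k-1\ge k$. Both vertex conditions in fact hold strictly, so your argument also shows that equality occurs only at $k=2$ and the corner.
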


\begin{proof}
We proceed by induction on $k\ge2$. When $k=2$, we have $\sig_2=1+x_0$, and the hypotheses give
\[
\sqrt{bu}\le\frac{\sqrt{av}}{x_0-1},\qquad
(a+b)v\le\frac{x_0}{x_0-1}av.
\]
Thus,
\[
\begin{aligned}
&(a+b)(u+\sig_2 v)-\left(\sqrt{au}+2\sqrt{bv}\right)^2\\
={}&\left(2\sqrt{av}-\sqrt{bu}\right)^2-(3-x_0)(a+b)v\\
\ge{}&\left(\frac{(2x_0-3)^2}{(x_0-1)^2}-\frac{x_0(3-x_0)}{x_0-1}\right)av\\
={}&\frac{x_0^3-9(x_0-1)}{(x_0-1)^2}av=0.
\end{aligned}
\]
Now suppose that the conclusion holds for some integer $k\ge2$. Using $\sig_{k+1}=\sig_k+x_0^k$ and the induction hypothesis, we obtain
\[
\begin{aligned}
&(a+b)(u+\sig_{k+1}v)
  -\left(\sqrt{au}+(k+1)\sqrt{bv}\right)^2\\
={}&(a+b)(u+\sig_k v)-\left(\sqrt{au}+k\sqrt{bv}\right)^2\\
&\qquad{}+x_0^k(a+b)v-(2k+1)bv-2\sqrt{abuv}\\
\ge{}&x_0^k(a+b)v-(2k+1)bv-2\sqrt{abuv}\\
>{}&2k(a+b)v-(2k+1)bv-2\sqrt{abuv}\\
={}&2kav-au-2bv+\left(\sqrt{au}-\sqrt{bv}\right)^2\\
\ge{}&2kav-au-2bv>2kav-av-2av>0.
\end{aligned}
\]
This completes the induction.
\end{proof}

\section{Main theorem}\label{sec:main-proof}
We give an alternative proof of the following theorem of Liu \cite[Theorem~1.1]{Liu2026}.

\begin{theorem*}
  $c_*=x_0$.
\end{theorem*}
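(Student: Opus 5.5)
The lower bound $c_*\ge x_0$ is the Beke--Cs\'aji--Csikv\'ari--Pituk construction quoted in the introduction. So it suffices to prove $D_{x_0}(M)\ge0$ for every loopless and coloopless $M$. Write $c=x_0$, $A(M)=T_M(c,0)$ and $B(M)=T_M(0,c)$.

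We argue by induction on $|E(M)|$ and prove the stronger-looking statement $A(M)B(M)\ge b(M)^2$. First we make some reductions.
\begin{itemize}
\item Since the Tutte polynomial and $b$ are multiplicative over direct sums, we may assume $M$ is connected.
\item The statement is invariant under duality, because $A(M^*)=B(M)$ and $b(M^*)=b(M)$.
\item If $M$ has an element $e$ lying in no two-element circuit and no two-element cocircuit, then $M\backslash e$ and $M/e$ are both loopless and coloopless. Deletion--contraction then gives
\[
A(M)B(M)\ge\bigl(A_{\backslash}+A_{/}\bigr)\bigl(B_{\backslash}+B_{/}\bigr)\ge\bigl(\sqrt{A_{\backslash}B_{\backslash}}+\sqrt{A_{/}B_{/}}\bigr)^2\ge\bigl(b(M\backslash e)+b(M/e)\bigr)^2
\]
by Cauchy--Schwarz and induction.
\end{itemize}
Hence we may assume every element lies in a two-element circuit or cocircuit. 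Up to duality, $M$ then has a parallel class $P$ with $|P|=k\ge2$. If $P=E(M)$, then $M=U_{1,k}$ and the inequality is checked directly ($c\cdot(1+c+\dots+c^{k-1})\ge k^2$ at $c=x_0>2$).

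Apply Lemma~\ref{lem:parallel-reduction} to get $N$, $L$, $Q$, with
\[
A(M)=A(L)+A(Q),\qquad B(M)=B(L)+\sig_k B(Q),\qquad b(M)=b(L)+k\,b(Q).
\]
Put $a=A(L)$, $b=A(Q)$, $u=B(L)$, $v=B(Q)$. By induction, $bv\ge b(Q)^2$, since $Q$ is loopless and coloopless.

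The aim is to feed Lemma~\ref{lem:scalar} with $au\ge b(L)^2$ and the two hypotheses $a\ge(x_0-1)b$ and $v\ge(x_0-1)u$. These come from Lemma~\ref{lem:axis}:
\begin{itemize}
\item Applied to the loopless $N$, it gives $a+b=A(N)\ge c\,A(Q)=cb$, i.e.\ $a\ge(c-1)b$.
\item Applied to the coloopless $N$, it gives $u+v=B(N)\ge c\,B(L)=cu$, i.e.\ $v\ge(c-1)u$.
\end{itemize}
If $L$ is coloopless, induction gives $au\ge b(L)^2$. Lemma~\ref{lem:scalar} then yields $A(M)B(M)\ge(\sqrt{au}+k\sqrt{bv})^2\ge b(M)^2$. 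Degenerate cases with some quantity equal to $0$ are handled by continuity, or directly.

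The main obstacle is when $L$ has coloops, since then $B(L)=0$ and induction fails for $L$. Lemma~\ref{lem:two-coloops} gives at least two coloops $\mathcal J$, and Lemma~\ref{lem:basis-injection} applied to $N$ gives $b(Q)\ge|\mathcal J|\,b(L)\ge2b(L)$. So $b(M)\le(k+\tfrac12)b(Q)$, and it suffices to show $A(M)B(M)\ge(k+\tfrac12)^2 b(Q)^2$. Here $u=0$, so we need to beat $(k+\tfrac12)^2$ using $A(M)\ge cb+\dots$. Plan:
\begin{itemize}
\item Lemma~\ref{lem:axis} gives $A(M)=a+b\ge cb$.
\item We have $B(M)=\sig_k v$.
\item Then $A(M)B(M)\ge c\,\sig_k\,bv\ge c\,\sig_k\,b(Q)^2$.
\end{itemize}
It remains to check $c\,\sig_k\ge(k+\tfrac12)^2$ for $k\ge2$. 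At $k=2$ this reads $x_0(1+x_0)\approx7.18\ge6.25$, and the left side grows geometrically in $k$, so it holds for all $k$. Should $|\mathcal J|$ be larger, the bound only improves. The remaining care goes into verifying that the inequality $b(Q)\ge 2b(L)$ is the correct direction to use, and into the small edge cases.
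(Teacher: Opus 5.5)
Your proposal is correct and follows the paper's proof essentially step by step. It uses the same reductions, the same application of Lemmas~\ref{lem:axis} and~\ref{lem:basis-injection} to $N$, Lemma~\ref{lem:scalar} when $L$ is coloopless, and the bound $b(M)\le(k+\tfrac12)b(Q)$ together with $x_0\sig_k>(k+\tfrac12)^2$ when $L$ has coloops. Your geometric-growth check of that last scalar inequality simply replaces the paper's derivation via Lemma~\ref{lem:scalar}.

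There is one small slip. For $M=U_{1,k}$ one has $T_M(0,x_0)=x_0+\cdots+x_0^{k-1}$, so the quantity to check is $x_0(x_0+\cdots+x_0^{k-1})\ge k^2$ rather than $x_0\sig_k\ge k^2$. This still holds: using $x_0\ge2$ and $x_0^j\ge2j$, it is at least $2k(k-1)\ge k^2$.
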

\begin{proof}
In view of the known lower bound $c_*\ge x_0$, it suffices to prove that every loopless and coloopless matroid $M$ satisfies $D_{x_0}(M)\ge0$. We proceed by induction on $|E(M)|$. The Tutte polynomial of the empty matroid is $1$, so the conclusion holds. Now let $M$ be nonempty, and suppose that the conclusion holds for all loopless and coloopless matroids with fewer elements.

If $M$ is disconnected, then each of its connected components is still loopless and coloopless and has fewer elements. Applying the induction hypothesis to each component and using the factorization of the Tutte polynomial over direct sums shows that $M$ satisfies the required inequality. We may therefore assume that $M$ is connected.

If some element $f$ of $M$ belongs to neither a two-element circuit nor a two-element cocircuit, then both $M\backslash f$ and $M/f$ are loopless and coloopless. Applying the induction hypothesis to these two minors, each of which has fewer elements, and then using the deletion--contraction relation for the Tutte polynomial and the Cauchy--Schwarz inequality gives the conclusion. Thus, it remains to consider the case in which every element of $M$ belongs to a two-element circuit or a two-element cocircuit; in this case, $M$ has a nontrivial parallel class or series class, and the required inequality is invariant under duality, so we may assume without loss of generality that $M$ has a parallel class $P$ with $|P|=k\ge2$.

If $P=E(M)$, then $M$ must be the uniform matroid of rank one, whose Tutte polynomial is
\[
T_M(x,y)=x+y+\cdots+y^{k-1}.
\]
Since $x_0\ge2$ and $2^j\ge2j$, we obtain
\[
T_M(x_0,0)T_M(0,x_0)
=x_0\sum_{j=1}^{k-1}x_0^j\ge4\sum_{j=1}^{k-1}j
=2k(k-1)\ge k^2=b(M)^2,
\]
so it remains to consider the case $P\subsetneq E(M)$. Define $N,L,Q$ as in Lemma~\ref{lem:parallel-reduction}; then $N$ and $Q$ are loopless and coloopless, and $L$ is loopless. Set $a=T_L(x_0,0)$ and $b=T_Q(x_0,0)$; further, set $u=T_L(0,x_0)$ and $v=T_Q(0,x_0)$, and write $s=b(L)$ and $t=b(Q)$.
Equations~\eqref{eq:parallel-polynomial} and~\eqref{eq:parallel-bases} give
\begin{equation}\label{eq:main-evaluations}
T_M(x_0,0)=a+b,\qquad T_M(0,x_0)=u+\sig_k v,\qquad b(M)=s+kt.
\end{equation}
Since $e$ is neither a loop nor a coloop of $N$, the deletion--contraction relation and Lemma~\ref{lem:axis} give
\begin{equation}\label{eq:main-ratios}
a\ge(x_0-1)b,\qquad v\ge(x_0-1)u.
\end{equation}

If $L$ is coloopless, then both $L$ and $Q$ are loopless and coloopless matroids with fewer elements, so the induction hypothesis gives
\[
au\ge s^2,\qquad bv\ge t^2.
\]
Thus, combining \eqref{eq:main-evaluations} with Lemma~\ref{lem:scalar} directly yields
\[
T_M(x_0,0)T_M(0,x_0)=(a+b)\bigl(u+\sig_k v\bigr)\ge\left(\sqrt{au}+k\sqrt{bv}\right)^2\ge(s+kt)^2=b(M)^2.
\]
This proves the conclusion in this case.

Now suppose that $L$ has a coloop, so that $u=0$. By Lemma~\ref{lem:two-coloops}, the set $\mathcal{J}$ of coloops of $L$ has at least two elements; applying Lemma~\ref{lem:basis-injection} to $N$ then gives
\begin{equation}\label{eq:t-ge-2s}
t\ge |\mathcal{J}|s\ge2s.
\end{equation}
The matroid $Q$ has fewer elements and is loopless and coloopless, so the induction hypothesis gives $bv\ge t^2$. Applying Lemma~\ref{lem:scalar} to the quadruple $(x_0-1,1,1,x_0-1)$ gives
\[
x_0\bigl(1+(x_0-1)\sig_k\bigr)\ge\left(\sqrt{x_0-1}+k\sqrt{x_0-1}\right)^2=(k+1)^2(x_0-1).
\]
That is, $x_0^{k+1}\ge (k+1)^2(x_0-1)$, and using this bound gives
\begin{equation}\label{eq:coloop-scalar}
x_0\sig_k=\frac{x_0^{k+1}-x_0}{x_0-1}\ge(k+1)^2-\frac{x_0}{x_0-1}>(k+1)^2-2>\left(k+\frac12\right)^2,
\end{equation}
where the last step uses $(k+1)^2-2-(k+\frac12)^2=k-\frac54>0$. Combining \eqref{eq:main-evaluations}, \eqref{eq:main-ratios}, and~\eqref{eq:coloop-scalar}, we obtain
\[
T_M(x_0,0)T_M(0,x_0)=(a+b)\sig_k v\ge x_0\sig_k bv\ge x_0\sig_k t^2>\left(k+\frac12\right)^2t^2\ge(s+kt)^2=b(M)^2,
\]
where the last step uses \eqref{eq:t-ge-2s}. The conclusion also holds in this case, completing the induction.
\end{proof}

\section*{Data availability}
Data sharing is not applicable to this article, as no datasets were generated or analysed in this theoretical study.

\section*{Funding}
This research received no specific grant from any funding agency in the public, commercial, or not-for-profit sectors.

\section*{Use of artificial intelligence}
The authors used ChatGPT (Astra, OpenAI; \url{https://chatgpt.com}) in early September 2026 for language editing and translation, literature searches, and exploratory discussions of parts of the proofs. They also used Codex (OpenAI, model \texttt{gpt-6-astra}; \url{https://openai.com/codex/}) in October 2026 for manuscript editing, literature comparisons, and feedback on the proofs. The authors checked each step of the arguments they adopted and take full responsibility for the mathematical content and the final manuscript.

\end{document}